\newtheorem{Theorem}{Theorem}[section]
\newtheorem{Proposition}[Theorem]{Proposition}
\newtheorem{Definition}[Theorem]{Definition}
\newtheorem{Example}[Theorem]{Example}
\newcommand{\bZ}{\ensuremath{\mathbb Z}}
\newcommand{\cC}{\ensuremath{\mathcal C}}
\newcommand{\cE}{\ensuremath{\mathcal E}}
\newcommand{\cB}{\ensuremath{\mathcal B}}
\DeclareMathOperator{\Ker}{Ker}
\newcommand\NARng{\ensuremath{\mathsf{NARng}}}
\newcommand\Rng{\ensuremath{\mathsf{Rng}}}
\newcommand\Pt{\ensuremath{\mathsf{Pt}}}
\newcommand\Act{\ensuremath{\mathsf{Act}}}
\newcommand\Gp{\ensuremath{\mathsf{Gp}}}
\newcommand\UA{\ensuremath{\mathrm{(UA)}}}
\newcommand\SH{\ensuremath{\mathrm{(SH)}}}
\newcommand\Des{\ensuremath{\mathrm{Des}}}
\newcommand\Aut{\ensuremath{\mathrm{Aut}}}
\newcommand{\diot}{\ensuremath{\divideontimes}}
\begin{document}

\title{Descent cospans for the fibration of points}
\author{A.~S.~Cigoli}
%\thanks{INdAM Cofund Marie Curie fellow\\Dipartimento di Matematica, Universit\`a degli Studi di Milano\\Institut de recherche en math\'ematique et physique}
%
\maketitle

\begin{abstract}
Draft version of a paper concerning an interpretation of the conditon \UA~introduced in \cite{CMM17} in terms of descent with respect to the fibrations of points.
\end{abstract}

% Intro

\section{Introduction}

In \cite{CMM17}, the authors introduced a condition, called \UA, that a 
semi-abelian category may or may not satisfy. Briefly, one requires that, whenever it exists, the action of a join $A\vee B$ on some object $X$ is uniquely determined by its restrictions to $A$ and $B$ respectively. The main purpose of this paper is to show that this condition can be interpreted as part of a descent property of extremal epimorphic cospans. We will prove that in a \emph{category of interest} in the sense of \cite{Orzech}  extremal epimorphic cospans are of descent with respect to the \emph{fibration of points}.

\section{The descent category of a cospan}

In this section, \cB~will be a category with pullbacks and $P\colon\cE\to\cB$ a cloven fibration, i.e.~a fibration equipped with a cleavage. We denote by $\cE_B$ the fibre of $P$ over an object $B$ of \cB, i.e.~$\cE_B=P^{-1}(B)$. For a morphism $f\colon A\to B$ in \cB, we denote by $f^*\colon\cE_B\to\cE_A$ the corresponding change-of-base functor.

Now, for a cospan
\[
\xymatrix{
A \ar[r]^{f} & B & C \ar[l]_{g}
}
\]
in the base category \cB, we would like to investigate the following question: is it possible to reconstruct the fibre $\cE_B=P^{-1}(B)$ starting from the fibres $\cE_A$ and $\cE_C$, and additional ``descent data''? This is a typical question in descent theory, and in order to formulate it properly, we need to define a suitable \emph{descent category} $\Des_{P}(f,g)$ relative to the fibration $P$ and the cospan $(f,g)$, together with a functor
\[
\Phi\colon\cE_B \longrightarrow \Des_{P}(f,g).
\]
Then, we can give the following 

\begin{Definition}
A cospan $(f,g)$ as above is of:
\begin{itemize}
 \item $P$-\emph{descent} if $\Phi$ is fully faithful;
 \item \emph{effective $P$-descent} if $\Phi$ is an equivalence of categories.
\end{itemize}
\end{Definition}

The rest of this section is devoted to a careful definition of the category $\Des_P(f,g)$ and of the functor $\Phi$. Of course, this is well-known material, that we recall here for the reader's convenience. A self-contained account on descent morphisms with respect to a cloven fibration can be found in \cite{FacDesII}. For the definition of descent category with respect to a coterminal family, one may also consult \cite[B1.5]{Elephant}.

Let us then go back to the cospan
\[
\xymatrix{
A \ar[r]^{f} & B & C \ar[l]_{g}
}
\]
in \cB. The three pullbacks
\begin{equation} \label{diag:3pb}
\xymatrix@!=4ex{
& A\times_B A \ar[dl]_{\pi_1} \ar[dr]^{\pi_2} & & A\times_B C \ar[dl]_{\pi_1} \ar[dr]^{\pi_2} & & C\times_B C \ar[dl]_{\pi_1} \ar[dr]^{\pi_2} \\
A \ar[dr]^{f} & & A \ar[dl]_{f} \ar[dr]^{f} & & C \ar[dl]_{g} \ar[dr]^{g} & & C \ar[dl]_{g} \\
& B & & B & & B
}
\end{equation}
induce, by pseudo-functoriality, three natural isomorphisms
\[
\alpha\colon \pi_1^*f^* \to \pi_2^*f^*,\qquad
\beta\colon \pi_1^*f^* \to \pi_2^*g^*,\qquad
\gamma\colon \pi_1^*g^* \to \pi_2^*g^*.
\]
Moreover, the diagonal morphisms
\[
\delta_A=\langle 1,1\rangle\colon A \to A\times_B A,\qquad\delta_C=\langle 1,1\rangle\colon C \to C\times_B C 
\]
yield the commutativity of the triangles:
\[
\xymatrix@=4ex{
\delta_A^*\pi_1^*f^* \ar[dr]_{\sim} \ar[rr]^{\delta_A^*\alpha} & & \delta_A^*\pi_2^*f^* \ar[dl]^{\sim} \\
& f^*
}
\qquad
\xymatrix@=4ex{
\delta_C^*\pi_1^*g^* \ar[dr]_{\sim} \ar[rr]^{\delta_C^*\gamma} & & \delta_C^*\pi_2^*g^* \ar[dl]^{\sim} \\
& g^*
}
\]
where the unnamed isomorphism are those induced, by pseudo-functoriality, by the equalities $\pi_1\delta_A=1_A$ and $\pi_2\delta_A=1_A$ (and similarly for $C$). Further conditions arise from higher order pullbacks. Namely, coherence of the change-of-base along all possible (backward) paths from $B$ to $A\times_BA\times_B A$ in the triple pullback
\begin{equation} \label{diag:cubeA}
\xymatrix@!=3ex{
& A \times_B A \times_B A \ar[rr]^-{\pi_{23}} \ar[dd]|{\hole}^(.7){\pi_{13}} \ar[ld]_{\pi_{12}} & & A \times_B A \ar[ld]_{\pi_{1}} \ar[dd]^{\pi_{2}} \\
A \times_B A \ar[rr]^(.7){\pi_{2}} \ar[dd]_{\pi_{1}} & & A \ar[dd]^(.3){f} \\
& A \times_B A \ar[rr]|{\hole}_(.7){\pi_{2}} \ar[dl]^{\pi_{1}} & & A \ar[dl]^{f} \\
A \ar[rr]^(.6){f} & & B
}
\end{equation}
makes the diagram
\[
\xymatrix@=4ex{
& \pi_{12}^*\pi_2^*f^* \ar[r]^{\sim} & \pi_{23}^*\pi_1^*f^* \ar[dr]^{\pi_{23}^*\alpha} \\
\pi_{12}^*\pi_1^*f^* \ar[ur]^{\pi_{12}^*\alpha} \ar[dr]_{\sim} & & & \pi_{23}^*\pi_2^*f^* \\
& \pi_{13}^*\pi_1^*f^* \ar[r]^{\pi_{13}^*\alpha} & \pi_{13}^*\pi_2^*f^* \ar[ur]_{\sim}
}
\]
commute, where the unnamed isomorphisms are induced by the commutativity of the upper, back and left face of the cube \eqref{diag:cubeA}. Similar conditions arise by coherence of all change-of-base in all the remaining triple pullbacks generated by $f$ and $g$:

{\small
\[
\xymatrix@!=2ex{
& C \times_B C \times_B C \ar[rr]^-{\pi_{23}} \ar[dd]^(.7){\pi_{13}} \ar[ld]_{\pi_{12}} & & C \times_B C \ar[ld]_{\pi_{1}} \ar[dd]^{\pi_{2}} \\
C \times_B C \ar[rr]^(.7){\pi_{2}} \ar[dd]_{\pi_{1}} & & C \ar[dd]^(.3){g} \\
& C \times_B C \ar[rr]_(.7){\pi_{2}} \ar[dl]^{\pi_{1}} & & C \ar[dl]^{g} \\
C \ar[rr]^(.6){g} & & B
}
\quad
\xymatrix@=3ex{
& \pi_{12}^*\pi_2^*g^* \ar[r]^{\sim} & \pi_{23}^*\pi_1^*g^* \ar[dr]^{\pi_{23}^*\gamma} \\
\pi_{12}^*\pi_1^*g^* \ar[ur]^{\pi_{12}^*\gamma} \ar[dr]_{\sim} & & & \pi_{23}^*\pi_2^*g^* \\
& \pi_{13}^*\pi_1^*g^* \ar[r]^{\pi_{13}^*\gamma} & \pi_{13}^*\pi_2^*g^* \ar[ur]_{\sim}
}
\quad
\]
\[
\xymatrix@!=2ex{
& A \times_B A \times_B C \ar[rr]^-{\pi_{23}} \ar[dd]^(.7){\pi_{13}} \ar[ld]_{\pi_{12}} & & A \times_B C \ar[ld]_{\pi_{1}} \ar[dd]^{\pi_{2}} \\
A \times_B A \ar[rr]^(.7){\pi_{2}} \ar[dd]_{\pi_{1}} & & A \ar[dd]^(.3){f} \\
& A \times_B C \ar[rr]_(.7){\pi_{2}} \ar[dl]^{\pi_{1}} & & C \ar[dl]^{g} \\
A \ar[rr]^(.6){f} & & B
}
\quad
\xymatrix@=3ex{
& \pi_{12}^*\pi_2^*f^* \ar[r]^{\sim} & \pi_{23}^*\pi_1^*f^* \ar[dr]^{\pi_{23}^*\beta} \\
\pi_{12}^*\pi_1^*f^* \ar[ur]^{\pi_{12}^*\alpha} \ar[dr]_{\sim} & & & \pi_{23}^*\pi_2^*g^* \\
& \pi_{13}^*\pi_1^*f^* \ar[r]^{\pi_{13}^*\beta} & \pi_{13}^*\pi_2^*g^* \ar[ur]_{\sim}
}
\]
\[
\xymatrix@!=2ex{
& A \times_B C \times_B C \ar[rr]^-{\pi_{23}} \ar[dd]^(.7){\pi_{13}} \ar[ld]_{\pi_{12}} & & C \times_B C \ar[ld]_{\pi_{1}} \ar[dd]^{\pi_{2}} \\
A \times_B C \ar[rr]^(.7){\pi_{2}} \ar[dd]_{\pi_{1}} & & C \ar[dd]^(.3){g} \\
& A \times_B C \ar[rr]_(.7){\pi_{2}} \ar[dl]^{\pi_{1}} & & C \ar[dl]^{g} \\
A \ar[rr]^(.6){f} & & B
}
\quad
\xymatrix@=3ex{
& \pi_{12}^*\pi_2^*g^* \ar[r]^{\sim} & \pi_{23}^*\pi_1^*g^* \ar[dr]^{\pi_{23}^*\gamma} \\
\pi_{12}^*\pi_1^*f^* \ar[ur]^{\pi_{12}^*\beta} \ar[dr]_{\sim} & & & \pi_{23}^*\pi_2^*g^* \\
& \pi_{13}^*\pi_1^*f^* \ar[r]^{\pi_{13}^*\beta} & \pi_{13}^*\pi_2^*g^* \ar[ur]_{\sim}
}
\]
}

Roughly speaking, the category of descent data for the cospan $(f,g)$, with respect to the fibration $P$, consists of those objects and morphisms in the fibres over $A$ and $C$, which behave ``as if'' they came from the fibre over $B$ via the change of base $f^*$ and $g^*$ respectively. More precisely:

\begin{Definition} \label{def:Des}
An object of $\Des_P(f,g)$ consists of a set $(D,F,a,b,c)$ where
\begin{itemize}
 \item $D$ is an object of $\cE_A$,
 \item $F$ is an object of $\cE_C$,
 \item $a\colon \pi_1^*D \to \pi_2^*D$ is an isomorphism in $\cE_{A \times_B A}$,
 \item $b\colon \pi_1^*D \to \pi_2^*F$ is an isomorphism in $\cE_{A \times_B C}$,
 \item $c\colon \pi_1^*F \to \pi_2^*F$ is an isomorphism in $\cE_{C \times_B C}$,
\end{itemize}
making the diagrams
\[
\xymatrix@=4ex{
\delta^*\pi_1^*D \ar[dr]_{\sim} \ar[rr]^{\delta^*a} & & \delta^*\pi_2^*D \ar[dl]^{\sim} \\
& D
}
\qquad
\xymatrix@=4ex{
\delta^*\pi_1^*F \ar[dr]_{\sim} \ar[rr]^{\delta^*c} & & \delta^*\pi_2^*F \ar[dl]^{\sim} \\
& F
}
\]
{\footnotesize
\[
\xymatrix@=2ex{
& \pi_{12}^*\pi_2^*D \ar[r]^{\sim} & \pi_{23}^*\pi_1^*D \ar[dr]^{\pi_{23}^*a} \\
\pi_{12}^*\pi_1^*D \ar[ur]^{\pi_{12}^*a} \ar[dr]_{\sim} & & & \pi_{23}^*\pi_2^*D \\
& \pi_{13}^*\pi_1^*D \ar[r]^{\pi_{13}^*a} & \pi_{13}^*\pi_2^*D \ar[ur]_{\sim}
}
\quad
\xymatrix@=2ex{
& \pi_{12}^*\pi_2^*D \ar[r]^{\sim} & \pi_{23}^*\pi_1^*D \ar[dr]^{\pi_{23}^*b} \\
\pi_{12}^*\pi_1^*D \ar[ur]^{\pi_{12}^*a} \ar[dr]_{\sim} & & & \pi_{23}^*\pi_2^*F \\
& \pi_{13}^*\pi_1^*D \ar[r]^{\pi_{13}^*b} & \pi_{13}^*\pi_2^*F \ar[ur]_{\sim}
}
\]
\[
\xymatrix@=2ex{
& \pi_{12}^*\pi_2^*F \ar[r]^{\sim} & \pi_{23}^*\pi_1^*F \ar[dr]^{\pi_{23}^*c} \\
\pi_{12}^*\pi_1^*D \ar[ur]^{\pi_{12}^*b} \ar[dr]_{\sim} & & & \pi_{23}^*\pi_2^*F \\
& \pi_{13}^*\pi_1^*D \ar[r]^{\pi_{13}^*b} & \pi_{13}^*\pi_2^*F \ar[ur]_{\sim}
}
\quad
\xymatrix@=2ex{
& \pi_{12}^*\pi_2^*F \ar[r]^{\sim} & \pi_{23}^*\pi_1^*F \ar[dr]^{\pi_{23}^*c} \\
\pi_{12}^*\pi_1^*F \ar[ur]^{\pi_{12}^*c} \ar[dr]_{\sim} & & & \pi_{23}^*\pi_2^*F \\
& \pi_{13}^*\pi_1^*F \ar[r]^{\pi_{13}^*c} & \pi_{13}^*\pi_2^*F \ar[ur]_{\sim}
}
\]
}

\noindent commute.

An arrow $(D,F,a,b,c)\to(D',F',a',b',c')$ of $\Des_P(f,g)$ consists of a pair $(h,k)$ where
\begin{itemize}
 \item $h\colon D \to D'$ is an arrow in $\cE_{A}$,
 \item $k\colon F \to F'$ is an arrow in $\cE_{C}$,
\end{itemize}
making the diagrams
\[
\xymatrix{
\pi_1^*D \ar[r]^{\pi_1^*h} \ar[d]_{a} & \pi_1^*D' \ar[d]^{a'} \\
\pi_2^*D \ar[r]^{\pi_2^*h} & \pi_2^*D'
}
\qquad
\xymatrix{
\pi_1^*D \ar[r]^{\pi_1^*h} \ar[d]_{b} & \pi_1^*D' \ar[d]^{b'} \\
\pi_2^*D \ar[r]^{\pi_2^*k} & \pi_2^*D'
}
\qquad
\xymatrix{
\pi_1^*D \ar[r]^{\pi_1^*k} \ar[d]_{c} & \pi_1^*D' \ar[d]^{c'} \\
\pi_2^*D \ar[r]^{\pi_2^*k} & \pi_2^*D'
}
\]
commute.
\end{Definition}

The comparison functor $\Phi \colon \cE_B \to \Des_P(f,g)$ is defined on each arrow of $\cE_B$ as follows:
\[
\xymatrix@R=2.5ex{
E \ar[dd]_j & & (f^*E,g^*E,\alpha_E,\beta_E,\gamma_E) \ar[dd]^{(f^*j,g^*j)} \\
& \longmapsto \\
E' & & (f^*E',g^*E',\alpha_{E'},\beta_{E'},\gamma_{E'}),
}
\]
where the $\alpha$'s, $\beta$'s and $\gamma$'s are components of the above defined natural isomorphisms. It is immediate to check that the right hand side data fulfill the conditions of Definition \ref{def:Des}.

\section{Internal actions and the fibration of points}

In this article, we will focus on a particular fibration, namely the so-called \emph{fibration of points}. The category $\Pt(\cC)$ of points in a category \cC~is the category whose objects are pairs $(p,s)$ of arrows in \cC~such that $ps=1_B$ for some object $B$ of \cC. Morphisms are obviously defined, and if \cC~has pullbacks, the functor sending each point $(p,s)$ as above to the object $B$ (the codomain of the split epimorphism $p$) is a fibration. The reader may refer to \cite{BB} for a detailed account on this fibration and its relevance in non-abelian algebra, as well as for the notion of \emph{protomodular} category by Bourn \cite{B90}, which is a main ingredient of the notion of \emph{semi-abelian} category by Janelidze, M\'arki, and Tholen \cite{JMT}.

Change-of-base for the fibration of points are obtained by pullback
\[
\xymatrix{
A \times_B E \ar@<.5ex>[d]^{\pi_1} \ar[r]^-{\pi_2} & E \ar@<.5ex>[d]^p \\
A \ar[r]_-f \ar@<.5ex>[u]^{\langle 1,sf \rangle} & B \ar@<.5ex>[u]^s
}
\]
so that, referring to the last diagram, $f^*(p,s)=(\pi_1,\langle 1,sf \rangle)$.

If \cC\ is moreover pointed with finite coproducts, then for each object $B$ of \cC~the  change-of-base functor along the initial map, i.e.~the kernel functor
\[
\Ker_B\colon \Pt_B(\cC)\to \cC
\]
sending $(p,s)$ to the kernel of $p$, admits a left adjoint sending any object $X$ of \cC~to the point
\[
\xymatrix{
B+X \ar@<.5ex>[r]^-{[1,0]} & B \ar@<.5ex>[l]^-{\iota_B}.
}
\]
The monad on \cC~induced by this adjunction is denoted by $B\flat(-)$ and its algebras are called \emph{internal $B$-actions} (see \cite{BJK}).

These notions are of special interest in the context of semi-abelian categories, where the kernel functor above is monadic, hence yielding an equivalence of categories
\begin{equation} \label{eq:points-actions}
\Pt_B(\cC)\stackrel{\sim}{\longrightarrow}\cC^{B\flat(-)}
\end{equation}
between points over $B$ and $B$-actions. This means that \cC~has categorical \emph{semi-direct products} in the sense of \cite{BJ}. In fact, if one looks at the above equivalence when \cC~is the category of groups, one obtains the classical correspondence between group actions and semi-direct products (see \cite{BJ}).

It's worth spelling out how the functor \ref{eq:points-actions} works. It takes a point $(p,s)$ over $B$, with $\Ker_B(p,s)=X$, to the leftmost vertical arrow in the commutative diagram
\[
\xymatrix@C=6ex{
B\flat X \ar[r]^-{\ker[1,0]} \ar[d]_{\xi} & B+X \ar@<.5ex>[r]^-{[1,0]} \ar[d]_{[s,\ker(p)]} & B \ar@<.5ex>[l]^-{\iota_B} \ar@{=}[d] \\
X \ar[r]_-{\ker(p)} & E \ar@<.5ex>[r]^{p} & B. \ar@<.5ex>[l]^{s}
}
\]
One can check that $\xi$ is indeed a $B$-action.

Once we have specified a kernel of $p$, a cartesian morphism of points (i.e.~a pullback) of codomain $(p,s)$ yields a unique morphism of split extensions inducing identity on kernels:
\[
\xymatrix{
0 \ar[r] & X \ar[d]_{1_X} \ar[r]^-{\langle 0,k \rangle} & A \times_B E \ar[d]_{\pi_2} \ar@<-.7ex>[r]_-{\pi_1} & A \ar[r] \ar@<-.7ex>[l]_-{\langle 1,sf \rangle} \ar[d]^{f} & 0 \\
0 \ar[r] & X \ar[r]_-{k=\ker(p)} & E \ar@<-.7ex>[r]_{p} & B \ar@<-.7ex>[l]_{s} \ar[r] & 0,
}
\]
and this corresponds to a unique morphism
\[
\xymatrix{
A \flat X \ar[r]^{f\flat 1} \ar[dr]_{f^*\xi} & B \flat X \ar[d]^\xi \\
& X
}
\]
of actions on the object $X$. Moreover, it is easy to check, by the construction above, that isomorphic points over $B$ yield the same $B$-action on a fixed kernel.

\section{Extremal epimorphic cospans and the condition \UA}

A remarkable property of group actions is the following: if two groups $G$ and $H$ act on some group $X$, then there is a unique action of the coproduct $G+H$ on $X$ 
which restricts to the previous ones. This is a rather special feature of the category of groups, which does not hold in any semi-abelian category. However, there are interesting cases where, at least, if an action of the coproduct is defined, it is unique. This is due to the fact that those categories satisfy the condition \UA, introduced in \cite{CMM17}.
\begin{itemize}
    \item [\UA] For every extremal epimorphic cospan $\xymatrix{A \ar[r]^f & B & C \ar[l]_g}$ in \cC, and for any 4-tuple $(\xi_1,\xi_2,\xi_3,\xi_4)$ of actions on a fixed object $X$ making the diagram
\begin{equation} \label{diag:UA}
\xymatrix{
    A \flat X \ar[dr]_{\xi_1} \ar[r]^{f\flat 1} & B\flat X \ar@<-.5ex>[d]_(.4){\xi_3} \ar@<.5ex>[d]^(.4){\xi_4}
        & C \flat X \ar[dl]^{\xi_2} \ar[l]_{g\flat 1} \\
    & X
}
\end{equation}
commute, we have that $\xi_3=\xi_4$.
\end{itemize}

It is proved in \cite{CMM17} that \UA~holds in every semi-abelian category with \emph{representable} actions (see \cite{BJK}) and in every \emph{category of interest} in the sense of Orzech \cite{Orzech}. Such cases include the categories of groups, Lie algebras, rings, associative algebras and Leibniz algebras amongst others.

In fact, this condition may also be seen as part of a descent condition on the cospan $(f,g)$.

\begin{Proposition}
If a cospan $\xymatrix{A \ar[r]^f & B & C \ar[l]_g}$ in a semi-abelian category \cC~is of descent for the fibration of points, then for every commutative diagram \eqref{diag:UA}, we have that $\xi_3=\xi_4$.
\end{Proposition}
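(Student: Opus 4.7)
\emph{Proof plan.} The strategy is to translate the conclusion $\xi_3 = \xi_4$ into an identification of points in $\Pt_B(\cC)$ via the equivalence \eqref{eq:points-actions}, and then exploit full faithfulness of the comparison functor $\Phi$. Under \eqref{eq:points-actions}, $\xi_3$ and $\xi_4$ correspond to points $E_3, E_4 \in \Pt_B(\cC)$ with common kernel $X$. The hypotheses on \eqref{diag:UA} say that $f^*E_3$ and $f^*E_4$ (respectively $g^*E_3$ and $g^*E_4$) are points over $A$ (respectively $C$) inducing the same action $\xi_1$ (respectively $\xi_2$) on $X$. Applying the analogous equivalence over $A$, one obtains a unique isomorphism $h\colon f^*E_3 \to f^*E_4$ in $\Pt_A(\cC)$ that restricts to $1_X$ on kernels, and similarly a unique $k\colon g^*E_3 \to g^*E_4$ in $\Pt_C(\cC)$.

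Next I would verify that $(h,k)$ is a morphism $\Phi(E_3) \to \Phi(E_4)$ in $\Des_P(f,g)$, i.e.\ that the three coherence squares for $\alpha$, $\beta$, $\gamma$ commute. Consider the square over $A \times_B A$: all four objects are points over $A \times_B A$ with kernel $X$ carrying the common $(A\times_B A)$-action
\[
\xi_1 \circ (\pi_1 \flat 1) = \xi_3 \circ ((f\pi_1) \flat 1) = \xi_3 \circ ((f\pi_2) \flat 1) = \xi_1 \circ (\pi_2 \flat 1),
\]
where the middle equality holds because $f\pi_1 = f\pi_2$. The cleavage-induced isomorphisms $\alpha_{E_3}$ and $\alpha_{E_4}$ restrict to $1_X$ on kernels (a consequence of the fact that the kernel functor is itself a change-of-base, along the initial map, hence compatible with the pseudo-functoriality of the fibration of points), and so do $\pi_1^*h$ and $\pi_2^*h$ by construction of $h$. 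By the equivalence of points and actions over $A \times_B A$, any morphism between two points with a prescribed action on $X$ is uniquely determined by its effect on kernels; hence both composites in the square coincide. The squares over $A \times_B C$ and $C \times_B C$ are handled in exactly the same way, using $k$ in place of $h$ where appropriate and the analogous equalities among the actions.

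Finally, since $\Phi$ is fully faithful, and thus reflects isomorphisms, $(h,k)$ lifts to a unique isomorphism $j\colon E_3 \to E_4$ in $\Pt_B(\cC)$ with $f^*j = h$ and $g^*j = k$. Because $f^*$ preserves kernels, $j$ also restricts to $1_X$ on kernels; translating back through \eqref{eq:points-actions}, $j$ corresponds to a morphism of $B$-actions $(X, \xi_3) \to (X, \xi_4)$ whose underlying map on $X$ is $1_X$, which is precisely the equation $\xi_3 = \xi_4$. I expect the bulk of the work to be in the coherence verification of the middle step, which relies on the fact that the structural isomorphisms $\alpha, \beta, \gamma$ act as identities on kernels; once this compatibility is established, the remaining steps are essentially formal consequences of full faithfulness and of the equivalence between points and actions.
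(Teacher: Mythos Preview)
Your proposal is correct and follows essentially the same approach as the paper: translate $\xi_3,\xi_4$ to points via \eqref{eq:points-actions}, use the hypotheses to obtain an isomorphism between their images under $\Phi$, and invoke full faithfulness of $\Phi$ to conclude. The paper's proof is terser, simply asserting that the isomorphisms $f^*(p_3,s_3)\cong f^*(p_4,s_4)$ and $g^*(p_3,s_3)\cong g^*(p_4,s_4)$ ``clearly give'' an isomorphism in $\Des_\Pt(f,g)$, whereas you explicitly verify the coherence squares via the kernel-action argument; this extra care is warranted and your justification is sound.
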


\begin{proof}
By the equivalence \eqref{eq:points-actions}, $\xi_3$ and $\xi_4$ correspond to two points $(p_3,s_3)$ and $(p_4,s_4)$, respectively, over $B$. By change-of-base along $f$, $g$ and the diagonals of the pullbacks \eqref{diag:3pb}, those two points give two objects in the descent category $\Des_{\Pt}(f,g)$. These are the images of $(p_3,s_3)$ and $(p_4,s_4)$ under the functor
\[
\Phi\colon \Pt_B(\cC) \longrightarrow \Des_\Pt(f,g).
\]
The commutative diagram \eqref{diag:UA} tells us that $f^*(p_3,s_3)\cong f^*(p_4,s_4)$ and $g^*(p_3,s_3)\cong g^*(p_4,s_4)$. These two isomorphisms clearly give an isomorphism between $\Phi(p_3,s_3)$ and $\Phi(p_4,s_4)$. Now, the cospan $(f,g)$ is of descent if and only if $\Phi$ is fully faithful, so the latter isomorphism implies that $(p_3,s_3)\cong(p_4,s_4)$, hence $\xi_3=\xi_4$.
\end{proof}

It turns out that, for categories of interest, extremal epimorphic cospans are actually of descent for the fibration of points. In order to prove this, it suffices to show that, for a cospan
\[
\xymatrix{
A \ar[r]^{f} & B & C \ar[l]_{g},
}
\]
a morphism $u\colon X \to Y$ is equivariant with respect to given $B$-actions if and only if it is equivariant with respect to the induced $A$-actions and $C$-actions respectively. This observation is made precise in the following results. For the definition of category of interest and a description of internal actions in this context, the reader may refer to \cite{CM15}.

\begin{Proposition} \label{prop:equi}
Let \cC\ be a category of interest, and
\[
\xymatrix{
A \ar[r]^{f} & B & C \ar[l]_{g}
}
\]
an extremal epimorphic cospan in \cC. Let $\xi_X$ and $\xi_Y$ be actions of $B$ on two objects $X$ and $Y$. Then a morphism $u\colon X \to Y$ in \cC\ is equivariant with respect to the actions of $B$, i.e.~the diagram
\[
\xymatrix{
B\flat X \ar[r]^-{1\flat u} \ar[d]_{\xi_X} & B\flat Y \ar[d]^{\xi_Y} \\
X \ar[r]^u & Y
}
\]
commutes, if and only if it is equivariant with respect to the induced actions of $A$ and $C$ respectively, i.e.~the two diagrams
\[
\xymatrix{
A\flat X \ar[r]^-{1\flat u} \ar[d]_{f^*\xi_X} & A\flat Y \ar[d]^{f^*\xi_Y} \\
X \ar[r]^u & Y
} \qquad
\xymatrix{
C\flat X \ar[r]^-{1\flat u} \ar[d]_{g^*\xi_X} & C\flat Y \ar[d]^{g^*\xi_Y} \\
X \ar[r]^u & Y
}
\]
commute.
\end{Proposition}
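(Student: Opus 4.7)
My proposal is to prove the two implications separately. The ``only if'' direction is a routine diagram chase using bifunctoriality. Assuming $\xi_Y\circ(1\flat u)=u\circ\xi_X$, I would invoke bifunctoriality of $(B,X)\mapsto B\flat X$ to obtain $(1_B\flat u)\circ(f\flat 1_X)=(f\flat 1_Y)\circ(1_A\flat u)$, together with the formula for change-of-base recalled at the end of Section~3, namely $f^{*}\xi_X=\xi_X\circ(f\flat 1_X)$ and $f^{*}\xi_Y=\xi_Y\circ(f\flat 1_Y)$. Precomposing the equivariance square with $f\flat 1_X$ and rearranging then yields $u\circ f^{*}\xi_X=f^{*}\xi_Y\circ(1_A\flat u)$, which is exactly $A$-equivariance; the $C$-case is identical with $g$ in place of $f$.

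The ``if'' direction is the real content of the proposition, and the plan is to work with the explicit description of internal actions in a category of interest provided in~\cite{CM15}. There, a $B$-action on $X$ is equivalently presented as a family of set-theoretic binary operations $b\omega x$, indexed by the ``action operations'' $\omega$ in the signature of the theory, satisfying a prescribed list of compatibility axioms with the algebra structure of $\cC$. In this language, $B$-equivariance of $u$ amounts to the collection of identities $u(b\omega x)=b\omega u(x)$ for every admissible $\omega$, every $b\in B$ and every $x\in X$; the $A$- and $C$-equivariance hypotheses supply exactly these identities when $b\in f(A)$, respectively $b\in g(C)$.

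To close the argument, I would use that extremal epimorphicity of the cospan $(f,g)$ in the variety $\cC$ forces the subalgebra of $B$ generated by $f(A)\cup g(C)$ to coincide with $B$: every $b\in B$ is therefore the value of some term of the theory on arguments in $f(A)\cup g(C)$. An induction on the complexity of such a term then extends the equivariance identities from the generators to all of $B$. The base case is the hypothesis; the inductive step rewrites $(b_1\ast b_2)\omega x$, for $\ast$ a binary operation of the theory, in terms of iterated $b_i\omega(\cdots)$-expressions using the action axioms of~\cite{CM15}, allowing $u$ to be passed through by the induction hypothesis.

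The main obstacle I expect is precisely this inductive step: it demands careful book-keeping of how each binary operation $\ast$ in the signature of a category of interest interacts with each action operation $\omega$, via the derived axioms of~\cite{CM15}. These are exactly the algebraic features that distinguish categories of interest from arbitrary semi-abelian categories, and they are the same features that are used in~\cite{CMM17} to establish \UA~in this setting.
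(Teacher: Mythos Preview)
Your proposal is correct and follows essentially the same route as the paper: reduce to $B=A\vee C$ by extremal epimorphicity, use the explicit description of actions in a category of interest from \cite{CM15}, and prove $u(b\omega x)=b\omega u(x)$ by induction on the term complexity of $b$, the inductive step relying on the Orzech axioms that expand $(b_1\ast b_2)\omega x$ as a term in iterated actions of $b_1$ and $b_2$. The paper organises the computation slightly differently, separating the additive case $u(b\cdot x)=b\cdot u(x)$ (which follows directly from the action axioms and needs no induction) from the multiplicative case $u(b\ast x)=b\ast u(x)$ for $\ast\in\Omega_2'$, and then runs the induction only on the length of monomials $a_1\ast_1 c_1\ast_2\cdots$; but this is a presentational rather than a substantive difference.
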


\begin{proof}
The ``only if'' part is trivial, let us prove the ``if'' part. Suppose $u$ is equivariant with respect to the induced actions of $A$ and $C$ respectively. We have to show that for each $b$ in $B$ and $x$ in $X$:
\begin{enumerate}
\item $u(b\cdot x)=b\cdot u(x)$;
\item $u(b\ast x)=b\ast u(x)$ for each $\ast$ in $\Omega_2'$.
\end{enumerate}
Without loss of generality, for the sake of simplicity, we can suppose that $A$ and $C$ are subobjects of $B$, then $(f,g)$ being extremal epimorphic means that $B=A\vee C$ and each $b$ in $B$ can be expressed as a sum of monomials of the kind
\[
a_1\ast_{1}(c_1\ast_{2}(\ldots \ast_{2n}(a_n\ast_{2n-1}c_n)))
\]
where each $\ast_i$ is an operation in $\Omega_2'$ (a binary operation other than the group operation). One can easily check that equation 1.~holds by the properties of actions. As for equation 2., it suffices to check that
\[
u((a_1\ast_1 c_1\ast_2\ldots a_n\ast_{2n-1}c_n)\ast x)=(a_1\ast_{1}c_1\ast_{2}\ldots a_n\ast_{2n-1}c_n)\ast u(x)
\]
for each such monomial (we dropped parentheses for brevity). We can proceed by induction on the length of monomials. Let us take first a monomial $a\bar{\ast}c$ of length 2, then for each $\ast$ in $\Omega_2'$ there exists a term $w$ such that
\[
(a\bar{\ast}c)\ast x=w(a\ast_1(c\star_1 x),\ldots,a\ast_r(c\star_r x),c\ast_{r+1}(a\star_{r+1}x),\ldots,c\ast_s(a\star_s x))
\]
and consequently, since $u$ is a morphism and it is equivariant with respect to the actions of $A$ and $C$ respectively, we have
\begin{align*}
u((a\bar{\ast}c)\ast x) & = u(w(a\ast_1(c\star_1 x),\ldots,a\ast_r(c\star_r x),c\ast_{r+1}(a\star_{r+1}x),\\
& \qquad\ldots,c\ast_s(a\star_s x))) = \\
& = w(u(a\ast_1(c\star_1 x)),\ldots,u(a\ast_r(c\star_r x)),u(c\ast_{r+1}(a\star_{r+1}x)),\\
& \qquad\ldots,u(c\ast_s(a\star_s x))) = \\
& = w(a\ast_1(c\star_1 u(x)),\ldots,a\ast_r(c\star_r u(x)),c\ast_{r+1}(a\star_{r+1}u(x)),\\
& \qquad\ldots,c\ast_s(a\star_s u(x))) = \\
& = (a\bar{\ast}c)\ast u(x)\,.
\end{align*}
Take now a monomial
\[
a_1\ast_{1}(c_1\ast_{2}(\ldots \ast_{2n}(a_n\ast_{2n-1}c_n)))
\]
of arbitrary length and denote $l=c_1\ast_{2}(\ldots \ast_{2n}(a_n\ast_{2n-1}c_n))$ for simplicity. Then, for each $\ast$ in $\Omega_2'$, there exists a term $w$ such that
\[
(a\ast_1l)\ast x=w(a\diot_1(l\star_1 x),\ldots,a\diot_r(l\star_r x),l\diot_{r+1}(a\star_{r+1}x),\ldots,l\diot_s(a\star_s x))\,.
\]
But since $u$ is a morphism, it is equivariant with respect to the action of $A$ and since equivariance is guaranteed on terms of length lower than $2n$, we have
\begin{align*}
u((a\ast_1l)\ast x) & = w(a\diot_1(l\star_1 u(x)),\ldots,a\diot_r(l\star_r u(x)),l\diot_{r+1}(a\star_{r+1}u(x)),\\
& \qquad\ldots,l\diot_s(a\star_s u(x))) = (a\ast_1l)\ast u(x).
\end{align*}
This completes the proof.
\end{proof}

\begin{Theorem} \label{thm:ci.des}
An extremal epimorphic cospan in a category of interest \cC~is of descent for the fibration of points.
\end{Theorem}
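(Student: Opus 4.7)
The plan is to show that the comparison functor $\Phi\colon \Pt_B(\cC) \to \Des_\Pt(f,g)$ is fully faithful, treating faithfulness and fullness separately. The conceptual engine will be Proposition \ref{prop:equi}, which says that, in a category of interest, equivariance of a map between $B$-actions along an extremal epimorphic cospan $(f,g)$ is detected by the induced $A$- and $C$-equivariance. This is exactly the ``fullness'' content of descent translated at the level of actions, and pairs naturally with the equivalence \eqref{eq:points-actions} to give a clean argument.

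For faithfulness, I will observe that $\Phi$ factors through the pair functor $(f^*,g^*)\colon \Pt_B(\cC)\to\Pt_A(\cC)\times\Pt_C(\cC)$. Because $\Ker_B$ is monadic (hence faithful) and factors as $\Ker_A\circ f^*$ (change of base in $\Pt$ is computed by pullback and so preserves kernels strictly), the functor $f^*$ is already faithful, and faithfulness of $\Phi$ follows immediately.

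For fullness, I will take a morphism $(h,k)\colon\Phi(p_1,s_1)\to\Phi(p_2,s_2)$ in $\Des_\Pt(f,g)$. Writing $X_i=\Ker(p_i)$ and $\xi_i$ for the associated $B$-action, the morphism $h\colon f^*(p_1,s_1)\to f^*(p_2,s_2)$ in $\Pt_A(\cC)$ induces on kernels an $A$-equivariant map $u_h\colon X_1\to X_2$ (with respect to $f^*\xi_1,f^*\xi_2$), and similarly $k$ yields a $C$-equivariant map $u_k\colon X_1\to X_2$. Applying $\Ker$ to the compatibility square of Definition \ref{def:Des} involving $b=\beta_{(p_1,s_1)}$ and $b'=\beta_{(p_2,s_2)}$ collapses it to a square whose vertical arrows are the kernel components of the coherence isomorphism $\beta$, which are identities; this forces $u_h=u_k$. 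Calling this common map $u$, Proposition \ref{prop:equi} then promotes $u$ to a $B$-equivariant map, which by \eqref{eq:points-actions} corresponds to a unique morphism $j\colon(p_1,s_1)\to(p_2,s_2)$ in $\Pt_B(\cC)$ with $\Ker(j)=u$. Finally, faithfulness of $\Ker_A$ and $\Ker_C$ yields $f^*j=h$ and $g^*j=k$, so $\Phi(j)=(h,k)$.

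The main technical hurdle I expect is the claim that the pseudo-functoriality coherence isomorphism $\beta\colon\pi_1^*f^*\Rightarrow\pi_2^*g^*$ in the fibration of points acts as the identity on kernels. This rests on the strict compatibility of the pullback-based cleavage with the kernel functor: both composite change-of-base constructions produce the same kernel object $X$, and the canonical comparison isomorphism restricts on kernels to $1_X$. Once this observation is nailed down, the rest of the argument is routine bookkeeping, and the bulk of the mathematical content has already been packaged in Proposition \ref{prop:equi}.
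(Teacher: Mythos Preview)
Your proposal is correct and follows the same overall strategy as the paper: use Proposition~\ref{prop:equi} for fullness and a separate argument for faithfulness. The differences are in the details, and in each case your version is at least as clean.

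For faithfulness, the paper argues via protomodularity: it passes from the action square to the corresponding square of points and observes that $(\pi_2,s)$ is jointly extremal epimorphic, so $j$ is determined by $j\pi_2=\pi_2h$ and $js=s'$. You instead observe that $\Ker_B\cong\Ker_A\circ f^*$ and deduce faithfulness of $f^*$ from monadicity of $\Ker_B$. Both arguments ultimately rest on the same semi-abelian input (monadicity of $\Ker_B$ is what encodes protomodularity here), but yours is shorter and avoids drawing the cube.

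For fullness, the paper simply asserts that a morphism in $\Des_\Pt(f,g)$ ``gives in particular, by equivalence, two morphisms'' of actions with the \emph{same} underlying map $u\colon X\to Y$, without saying why the $A$-side and $C$-side kernel maps agree. You supply this step explicitly, using the compatibility square involving $b=\beta_{(p_1,s_1)}$ and the fact that the pseudo-functoriality isomorphism $\beta$ restricts to the identity on kernels. You also verify explicitly that the resulting $j$ satisfies $f^*j=h$ and $g^*j=k$ via faithfulness of $\Ker_A$ and $\Ker_C$, whereas the paper stops once the morphism of $B$-actions is produced. Your extra bookkeeping is genuinely needed for a complete proof that $\Phi$ is full, so in this respect your argument is more complete than the paper's sketch.
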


\begin{proof}
With the notation of Proposition \ref{prop:equi}, we have to show that the functor
\[
\Phi\colon \Pt_B(\cC) \longrightarrow \Des_\Pt(f,g)
\]
is fully faithful. First, let us prove it is full.

Consider two points $(p,s)$ and $(p',s')$ over $B$, together with chosen kernels $X$ and $Y$ for $p$ and $p'$ respectively. Let $\xi\colon B\flat X \to X$ and $\xi'\colon B\flat Y \to Y$ denote the corresponding $B$-actions. Then there are induced actions of $A$ and $C$ respectively on both $X$ and $Y$ and morphisms of actions
\[
\xymatrix{
A \flat X \ar[dr]_{f^*\xi} \ar[r]^{f\flat 1} & B\flat X \ar[d]_(.4){\xi} & C \flat X \ar[dl]^{g^*\xi} \ar[l]_{g\flat 1} \\
& X
} \qquad
\xymatrix{
A \flat Y \ar[dr]_{f^*\xi'} \ar[r]^{f\flat 1} & B\flat Y \ar[d]_(.4){\xi'} & C \flat Y \ar[dl]^{g^*\xi'} \ar[l]_{g\flat 1} \\
& Y.
}
\]
A morphism in $\Des_\Pt(f,g)$ between $\Phi(p,s)$ and $\Phi(p',s')$ gives in particular, by equivalence, two morphisms
\[
\xymatrix{
A\flat X \ar[r]^-{1\flat u} \ar[d]_{f^*\xi} & A\flat Y \ar[d]^{f^*\xi'} \\
X \ar[r]^u & Y
} \qquad
\xymatrix{
C\flat X \ar[r]^-{1\flat u} \ar[d]_{g^*\xi} & C\flat Y \ar[d]^{g^*\xi'} \\
X \ar[r]^u & Y
}
\]
of actions. By Proposition \ref{prop:equi}, a morphism
\[
\xymatrix{
B\flat X \ar[r]^-{1\flat u} \ar[d]_{\xi} & B\flat Y \ar[d]^{\xi'} \\
X \ar[r]^u & Y
}
\]
of $B$-actions is induced. By equivalence, this gives a morphism in $\Pt_B(\cC)$ between $(p,s)$ and $(p',s')$, so $\Phi$ is full.

Faithfulness then follows from the fact that \cC~is a protomodular category. For example, one can consider the commutative square of actions
\[
\xymatrix{
A\flat X \ar[rr]^-{1\flat u} \ar[dr]^{f\flat 1} \ar[dd]_{f^*\xi} & & A\flat Y \ar[dr]^{f\flat 1} \ar[dd]|{\hole}_(.3){f^*\xi'} \\
& B\flat X \ar[rr]_(.3){1\flat u} \ar[dd]^(.7){\xi} & & B\flat Y \ar[dd]^{\xi'} \\
X \ar[rr]|{\hole}^(.3)u \ar[dr]_{1_X} & & Y \ar[dr]^(.4){1_Y} \\
& X \ar[rr]^u & & Y
}
\]
which gives, by equivalence, a commutative square
\[
\xymatrix{
A\times_B E \ar[rr]^-{h} \ar[dr]^{\pi_2} \ar@<.5ex>[dd]^{\pi_1} & & A\times_B E' \ar[dr]^{\pi_2} \ar@<.5ex>[dd]|{\hole}^(.3){\pi_1} \\
& E \ar[rr]_(.3){j} \ar@<.5ex>[dd]^(.3){p} & & E' \ar@<.5ex>[dd]^{p'} \\
A \ar[rr]|{\hole}_(.7){1_A} \ar@<.5ex>[uu]^{\langle 1,sf \rangle} \ar[dr]_{f} & & A \ar[dr]^(.4){f} \ar@<.5ex>[uu]|{\hole}^(.7){\langle 1,s'f \rangle} \\
& B \ar[rr]^{1_B} \ar@<.5ex>[uu]^(.7){s} & & B \ar@<.5ex>[uu]^{s'}
}
\]
in $\Pt(\cC)$. By protomodularity, the pair $(\pi_2,s)$ of arrows with codomain $E$ is extremal epimorphic (see for example Lemma 3.1.22 in \cite{BB}). So $j$ is uniquely determined by the equations $j\pi_2=\pi_2h$ and $js=s'$, which hold for any $j\colon (p,s)\to(p',s')$ such that $f^*j=h$. This proves that $\Phi$ is faithful. 
\end{proof}

\section{Action representable context}

In this section, we draw our attention to the case where \cC~is a semi-abelian category with representable actions. We recall from \cite{BJK} that this means that the functor $\Act(-,X)$, associating with every object $X$ in \cC~the set of internal actions \emph{on} $X$, is representable. This happens, for example, in the category \Gp~of groups, where the representing object is $\Aut(X)$. In fact, it follows from results in \cite{Gray14} that not only \Gp~is action representable, but also its arrow category $\Gp^2$. The same property holds for any action representable semi-abelian category with normalizers (see Theorem 4.8 in \cite{Gray14}). It turns out that $\cC^2$ being action representable has interesting consequences (see Theorem \ref{thm:eff.des} below). Following \cite{Gray14}, we will denote by $[X]$ the object of \cC~representing actions on $X$ and by $[u]\colon [X,Y,u]\to[X]$ the object of $\cC^2$ representing actions on $u\colon X\to Y$.

We consider now cospans in \cC~which are not just extremal epimorphic, but which are colimit diagrams in the following sense: we shall suppose that the pair $(f,g)$ in the diagram
\[
\xymatrix@!=4ex{
A\times_B A \ar@<-.7ex>[dr]_{\pi_1} \ar@<.7ex>[dr]^{\pi_2} & & A\times_B C \ar[dl]_{\pi_1} \ar[dr]^{\pi_2} & & C\times_B C \ar@<-.7ex>[dl]_{\pi_1} \ar@<.7ex>[dl]^{\pi_2} \\
& A \ar[dr]_{f} & & C \ar[dl]^{g} \\
& & B
}
\]
is (together with the obvious composite arrows) the colimit cocone of the remaining part of the diagram, where all $\pi_1$'s and $\pi_2$'s are kernel pair or pullback projections. This happens, for example, when $f$ and $g$ are coproduct injections.

\begin{Theorem} \label{thm:eff.des}
In a semi-abelian category \cC, with $\cC^2$ action representable, a colimit cospan $(f,g)$ as above is of effective descent for the fibration of points. 
\end{Theorem}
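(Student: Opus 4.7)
The plan is to transport the descent problem via the equivalence \eqref{eq:points-actions} to the category of internal actions, and then twice exploit the combination of action representability with the colimit property of $(f,g)$: once in $\cC$ for essential surjectivity, once in $\cC^2$ for fullness.

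For essential surjectivity, I start with a descent datum $(D,F,a,b,c)$ and use \eqref{eq:points-actions} to convert $D$ into an $A$-action $\xi_A$ on a kernel $X$ of $D$ and $F$ into a $C$-action $\xi_C$ on a kernel $Y$ of $F$. The iso $b$ induces an identification $Y=X$ on kernels. The diagonal condition on $a$ together with its triple-pullback cocycle then say precisely that $\pi_1^*\xi_A=\pi_2^*\xi_A$ as $A\times_B A$-actions on $X$; the analogue for $c$ gives $\pi_1^*\xi_C=\pi_2^*\xi_C$ on $C\times_B C$; and the mixed cocycles involving $b$ give $\pi_1^*\xi_A=\pi_2^*\xi_C$ on $A\times_B C$. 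Under action representability of $\cC$ with classifier $[X]$, the actions $\xi_A$ and $\xi_C$ are named by morphisms $\chi_A\colon A\to[X]$ and $\chi_C\colon C\to[X]$, and the three equalities translate exactly into the cocone conditions $\chi_A\pi_1=\chi_A\pi_2$, $\chi_A\pi_1=\chi_C\pi_2$, $\chi_C\pi_1=\chi_C\pi_2$ on the pullback diagram. The colimit hypothesis on $(f,g)$ then yields a unique $\chi_B\colon B\to[X]$ with $\chi_B f=\chi_A$ and $\chi_B g=\chi_C$, i.e.\ a $B$-action $\xi_B$, hence, transporting back, a point $(p,s)\in\Pt_B(\cC)$ with $\Phi(p,s)\cong(D,F,a,b,c)$.

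For fullness, a morphism $(h,k)\colon\Phi(p,s)\to\Phi(p',s')$ in $\Des_\Pt(f,g)$ unpacks, via \eqref{eq:points-actions}, into an $A$-equivariant map $u_A\colon X\to Y$ and a $C$-equivariant map $u_C\colon X\to Y$ between the corresponding restricted actions; the compatibility with $b$ built into morphisms of $\Des_\Pt(f,g)$ forces $u_A=u_C=:u$. The key observation is that an equivariant morphism is precisely an action in $\cC^2$ on the object $u$, so using action representability of $\cC^2$ with classifier $[X,Y,u]$ I can repeat the essential-surjectivity argument \emph{inside} $\cC^2$: the $A$- and $C$-equivariances of $u$ give a cocone with apex $[X,Y,u]$, which extends uniquely along $(f,g)$ to a morphism $B\to[X,Y,u]$, i.e.\ to a $B$-equivariant structure on $u$. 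This transports back to a morphism of points $(p,s)\to(p',s')$ restricting to $(h,k)$. Faithfulness is then obtained verbatim as in the last paragraph of Theorem \ref{thm:ci.des}, using the protomodularity of $\cC$.

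The step I expect to be delicate is the translation between the ``up-to-canonical-isomorphism'' data of Definition \ref{def:Des} and the strict equalities of pulled-back actions required to form a cocone into a representing object. Concretely, once the kernels have been identified via $b$ and the diagonal conditions have been imposed, one must verify that the triple-pullback cocycles genuinely force $\pi_1^*\xi_A=\pi_2^*\xi_A$ (and its analogues) as equalities of monad algebras, not merely as isomorphisms; after this strictification, both applications of the colimit property become formal.
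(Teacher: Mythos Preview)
Your proposal is correct and follows essentially the same route as the paper: essential surjectivity via action representability of $\cC$ together with the colimit property, and fullness via action representability of $\cC^2$ together with the same colimit argument applied to identity arrows $1_A,1_C,1_B$ in $\cC^2$. The only cosmetic difference is that the paper extracts faithfulness directly from the uniqueness clause of the colimit in $\cC^2$ (the unique $\psi\colon 1_B\to[u]$ yields the \emph{unique} $j$), whereas you appeal separately to protomodularity as in Theorem~\ref{thm:ci.des}; both arguments are valid.
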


\begin{proof}
By definition, we have to prove that the functor
\[
\Phi\colon \Pt_B(\cC) \to \Des_\Pt(f,g)
\]
is an equivalence of categories.

The fact that $\Phi$ is essentially surjective follows from action representability of \cC. An object of $\Des_\Pt(f,g)$ amounts to a point $(p_A,s_A)$ over $A$, a point $(p_C,s_C)$ over $C$, and chosen isomorphisms
\begin{itemize}
 \item[] $a\colon \pi_1^*(p_A,s_A) \to \pi_2^*(p_A,s_A)$ in $\Pt_{A \times_B A}(\cC)$,
 \item[] $b\colon \pi_1^*(p_A,s_A) \to \pi_2^*(p_C,s_C)$ in $\Pt_{A \times_B C}(\cC)$,
 \item[] $c\colon \pi_1^*(p_C,s_C) \to \pi_2^*(p_C,s_C)$ in $\Pt_{C \times_B C}(\cC)$.
\end{itemize}
Via the equivalece between actions and points, once a common kernel $X$ have been chosen for all points, these data give rise to a collection of actions of $A$, $C$, $A\times_B A$, $A\times_B C$, and $C\times_B C$, respectively, on $X$, which are compatible with all pullback projections. Since \cC~is action representable, one eventually gets two morphisms $\phi_A\colon A \to [X]$ and $\phi_C\colon C \to [X]$ representing the actions of $A$ and $C$ on $X$ induced by $(p_A,s_A)$ and $(p_C,s_C)$ respectively. The isomorphisms $a$, $b$, and $c$ above induce the equations
\begin{itemize}
 \item[] $\phi_A\pi_1=\phi_A\pi_2\colon A \times_B A\to B$,
 \item[] $\phi_A\pi_1=\phi_C\pi_2\colon A \times_B C\to B$,
 \item[] $\phi_C\pi_1=\phi_C\pi_2\colon C \times_B C\to B$.
\end{itemize}
So by the colimit property, there exists a unique arrow $\phi\colon B \to[X]$ such that $\phi f=\phi_A$ and $\phi g = \phi_C$. This gives an action of $B$ on $X$ which restricts to $\phi_A$ and $\phi_B$, hence, by equivalence, a point $(p,s)$ over $B$ such that $\Phi(p,s)\cong((p_A,s_A),(p_C,s_C),a,b,c)$. This proves that $\Phi$ is essentially surjective.

To prove that $\Phi$ is fully faithful one proceeds similarly, using action representability of $\cC^2$. As in the proof of Theorem \ref{thm:ci.des}, consider two points $(p,s)$ and $(p',s')$ over $B$, together with chosen kernels $X$ and $Y$ for $p$ and $p'$ respectively. A morphism in $\Des_\Pt(f,g)$ between $\Phi(p,s)$ and $\Phi(p',s')$ gives, in particular, two morphisms $h\colon f^*(p,s)\to f^*(p',s')$ in $\Pt_A(\cC)$ and $k\colon g^*(p,s)\to g^*(p',s')$ in $\Pt_C(\cC)$, with the same induced restriction $u\colon X\to Y$ between the chosen kernels. In other words, we have an object in $\Pt_{1_A}(\cC^2)$ and an object in $\Pt_{1_C}(\cC^2)$, both restricting to $u$. By action representability of $\cC^2$, these correspond to morphisms $\psi_A\colon 1_A\to[u]$ and $\psi_C\colon 1_C\to[u]$ in $\cC^2$. Let us consider now the obvious colimit diagram
\[
\xymatrix@!=4ex{
1_{A\times_B A} \ar@<-.7ex>[dr]_{(\pi_1,\pi_1)} \ar@<.7ex>[dr]^{(\pi_2,\pi_2)} & & 1_{A\times_B C} \ar[dl]^(.7){(\pi_1,\pi_1)} \ar[dr]_{(\pi_2,\pi_2)} & & 1_{C\times_B C} \ar@<-.7ex>[dl]_{(\pi_1,\pi_1)} \ar@<.7ex>[dl]^{(\pi_2,\pi_2)} \\
& 1_A \ar[dr]_{(f,f)} & & 1_C \ar[dl]^{(g,g)} \\
& & 1_B
}
\]
in $\cC^2$. Since $h$ and $k$ are part of a morphism in $\Des_\Pt(f,g)$, then $\psi_A$ and $\psi_C$ satisfy suitable equations inducing, by the colimit property, a unique arrow $\psi\colon 1_B\to [u]$ in $\cC^2$ such that $\psi(f,f)=\psi_A$ and $\psi(g,g)=\psi_C$. This gives the (unique) desired morphism $j\colon (p,s)\to(p',s')$ in $\Pt_B(\cC)$.
\end{proof}

\section{Counterexamples}

In this section we gather some counterexamples which are useful in order to distinguish different classes of categories with respect to the conditions studied above.

\begin{Example}
Extremal epimorphic cospans in the category of groups may not be of effective descent. Let us consider the symmetric group $S_3$ on three elements, presented as a semidirect product of cyclic groups $C_2$ and $C_3$ of order 2 and 3 respectively. Namely, $S_3=\langle s,\,r \mid s^2=1,\,r^3=1,\,rs=sr^2 \rangle$. The canonical inclusions
\[
\xymatrix{
C_3 \ar[r] & S_3 & C_2 \ar[l]
}
\]
form a jointly strongly epimorphic pair, since the group in the middle is generated by the other two. Now, consider the following two actions on $\bZ^3$:
\begin{align*}
0\colon C_2 \to \Aut(\bZ^3)\,, \qquad & 0(s)(x,y,z)=id_{\bZ^3}(x,y,z)=(x,y,z) \\
\rho\colon C_3 \to \Aut(\bZ^3)\,, \qquad & \rho(r)(x,y,z)=(y,z,x)
\end{align*}
We are going to show that there isn't any action of $S_3$ whose restrictions to $C_2$ and $C_3$ respectively give rise to the actions described above. Indeed, suppose such an action $\phi\colon S_3\to \Aut(X)$ exists, then
\begin{align*}
\phi(sr^2)(1,0,0) & =\phi(s)(\phi(r^2)(1,0,0)) \\
& =0(s)(\rho(r^2)(1,0,0))=id_{\bZ^3}(\rho(r)^2(1,0,0))=(0,1,0)\,,
\end{align*}
but, on the other hand $sr^2=rs$ and
\begin{align*}
\phi(rs)(1,0,0) & =\phi(r)(\phi(s)(1,0,0)) \\
& =\rho(r)(0(s)(1,0,0))=\rho(r)(1,0,0)=(0,0,1)\,.
\end{align*}
\end{Example}

\begin{Example}
Colimit cospans in the category of (not necessarily unitary) rings may not be of effective descent. Here follows an example where this property fails for coproduct injections. Let $R$ be the following ring of matrices (with the usual sum and product):
$$ R=\left\{ \left( \begin{array}{cc} x & 0 \\ y & z \end{array}\right) \bigl| x,y,z\in\bZ \right\}. $$
Let us consider the conjugation action of $R$ on itself (given by product on both sides) and the action of the ring of integers $\mathbb{Z}$ on $R$ given, for each $n$ in $\mathbb{Z}$ by
\[
n \cdot \left( \begin{array}{cc} x & 0 \\ y & z \end{array}\right) =
\left( \begin{array}{cc} x & 0 \\ y & z \end{array}\right) \cdot n =
\left( \begin{array}{cc} nx & 0 \\ 0 & nz \end{array}\right).
\]
Suppose now that an action of the coproduct $R+\mathbb{Z}$ on $R$ exists, whose restrictions give the two actions above. In that case we should have, for example, that, for each $n$ in $\mathbb{Z}$ and $r,s$ in $R$, $r(s\cdot n)=(rs)\cdot n$, but taking
\[
r=\left( \begin{array}{cc} 1 & 0 \\ 1 & 1 \end{array}\right),
\quad s=\left( \begin{array}{cc} 1 & 0 \\ 1 & 1 \end{array}\right), \quad n=1,
\]
one finds
\[
(rs)\cdot n=\left(\left( \begin{array}{cc} 1 & 0 \\ 1 & 1 \end{array}\right)
\left( \begin{array}{cc} 1 & 0 \\ 1 & 1 \end{array}\right)\right)\cdot 1 =
\left( \begin{array}{cc} 1 & 0 \\ 2 & 1 \end{array}\right)\cdot 1 =
\left( \begin{array}{cc} 1 & 0 \\ 0 & 1 \end{array}\right),
\]
while
\[
r(s\cdot n) = \left( \begin{array}{cc} 1 & 0 \\ 1 & 1 \end{array}\right)
\left(\left( \begin{array}{cc} 1 & 0 \\ 1 & 1 \end{array}\right)\cdot 1 \right) =
\left( \begin{array}{cc} 1 & 0 \\ 1 & 1 \end{array}\right)
\left( \begin{array}{cc} 1 & 0 \\ 0 & 1 \end{array}\right) =
\left( \begin{array}{cc} 1 & 0 \\ 1 & 1 \end{array}\right).
\]
So no compatible action is defined of $R+\mathbb{Z}$ on $R$.
\end{Example}

\begin{Example}
In the category \NARng~of non-associative rings extremal epimorphic cospans are not even of descent for the fibration of points. Let us consider the object given by the abelian group on three generators $A=\bZ x + \bZ y + \bZ z$, equipped with a distributive binary operation defined on generators as
\[
\begin{array}{r|ccc}
	\cdot & x & y & z \\
	\hline
	x & 0 & z & 0 \\
	y & z & 0 & 0 \\
	z & 0 & 0 & 0 \\
\end{array}
\]
This is indeed a ring. However, in \NARng, one can define an internal action $\xi\colon A\flat \bZ \to \bZ$ in the following way:
\[ (ax+by+cz)*n=n*(ax+by+cz)=cn. \]
Notice that this is not an action in \Rng, since, for example, $(z\cdot z)*1=0\neq 1=z*(z*1)$. On the other hand, we also have a trivial action $\tau$ of $A$ on \bZ, defined by:
\[ (ax+by+cz)\bullet n=n \bullet(ax+by+cz)=0. \]
If we consider now the subobjects $i\colon X=\bZ x \to A$ and $j\colon Y=\bZ y \to A$, then clearly $A= X \vee Y$. Moreover, both actions $\xi$ and $\tau$ restrict to the same actions of $X$ and $Y$ respectively on \bZ, i.e.\ the trivial actions. Namely, $i^*(\xi)=i^*(\tau)$ and $j^*(\xi)=j^*(\tau)$.

This example shows that in \NARng\ the condition \UA\ does not hold. In addition, \NARng\ being a category of distributive $\Omega_2$-groups, which form a strongly protomodular category (see \cite{MM-Peiffer}) this also proves that strong protomodularity does not imply \UA.
\end{Example}

\section{Connection with \SH}

In this section, we shall consider the following reformulation of the property \UA, which makes sense in any protomodular category \cC. 
\begin{itemize}
\item [\UA] For every extremal epimorphic cospan $\xymatrix{A \ar[r]^f & B & C \ar[l]_g}$ in \cC, and for any commutative diagram of solid arrows
\[
\xymatrix@!=5ex{
A\times_B E \ar[dr]^{\alpha}_{\sim} \ar@<.5ex>[dd]^{\pi_1} \ar[rr]^{\pi_2} & & E \ar@<.5ex>[dd]|{\hole}^(.7){p} \ar@{-->}[dr]^{\psi}_{\sim} & & C \times_B E \ar[dr]^{\gamma}_{\sim} \ar@<.5ex>[dd]|{\hole}^(.7){\pi_1} \ar[ll]_{\pi_2} \\
& A\times_B E' \ar@<.5ex>[dd]^(.3){\pi_1} \ar[rr]^(.7){\pi_2} & & E' \ar@<.5ex>[dd]^(.3){p'} & & C \times_B E' \ar@<.5ex>[dd]^{\pi_1} \ar[ll]_(.7){\pi_2} \\
A \ar@{=}[dr] \ar[rr]|{\hole}_(.7)f \ar@<.5ex>[uu]^{\langle 1,sf \rangle} & & B \ar@{=}[dr] \ar@<.5ex>[uu]|{\hole}^(.3){s} & & C \ar@{=}[dr] \ar[ll]|{\hole}^(.3)g \ar@<.5ex>[uu]|{\hole}^(.3){\langle 1,sg \rangle} \\
& A \ar[rr]^{f} \ar@<.5ex>[uu]^(.7){\langle 1,s'f \rangle} & & B \ar@<.5ex>[uu]^(.7){s'} & & C \ar@<.5ex>[uu]^{\langle 1,s'g \rangle} \ar[ll]_{g}
}
\]
where all vertical pairs are points and $\alpha$ and $\beta$ are isomorphisms, there exists a unique dashed arrow $\psi$ giving a morphisms of points between $(p,s)$ and $(p',s')$. This arrow is necessarily an isomorphism by protomodularity.
\end{itemize}

We shall now compare this condition with the condition
\begin{itemize}
 \item [\SH] Two equivalence relations on the same object of \cC~centralize each other as soon as their associated normal subobjects cooperate.
\end{itemize}
This condition makes sense in any pointed protomodular category \cC. The reader may refer to \cite{BB} for a detailed account on centralization of equivalence relations and on cooperating morphisms.

\begin{Proposition}
In any pointed protomodular category, \UA\ implies \SH.
\end{Proposition}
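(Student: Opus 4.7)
The plan is to deduce the Smith--Pedicchio centralization of the equivalence relations associated to cooperating normal subobjects as a uniqueness consequence of the reformulated \UA.

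Let $K, L$ be normal subobjects of an object $X$ in a pointed protomodular $\cC$, and suppose they cooperate via a cooperator $\phi\colon K\times L\to X$ with $\phi\iota_K=k$ and $\phi\iota_L=l$. The argument hinges on applying \UA\ to the extremal epimorphic cospan
\[
\xymatrix{K \ar[r]^-{k} & K\vee L & L \ar[l]_-{l}}
\]
whose joint extremal epimorphicity is the very defining property of the Huq join of two normal subobjects.

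From the cooperator $\phi$ I would construct two points $(p,s), (p',s')$ over $K\vee L$ representing the two possible orderings in which one might ``compose'' the equivalence relations $R_K$ and $R_L$ (viewed as internal relations on $X$) as split extensions over $K\vee L$; the equivalence between points and split extensions in the pointed protomodular setting makes these constructions canonical. Cooperation of $K$ and $L$ would then produce, by direct verification, two isomorphisms $\alpha\colon k^*(p,s)\to k^*(p',s')$ and $\gamma\colon l^*(p,s)\to l^*(p',s')$ in $\Pt_K(\cC)$ and $\Pt_L(\cC)$ respectively, together with the coherence over the requisite multiple pullbacks: indeed on the $K$-side (resp.\ $L$-side) alone only one of the two equivalence relations contributes non-trivially, so the two orderings coincide on the pulled-back data.

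Applying \UA\ to the above cospan then yields a unique morphism $\psi\colon(p,s)\to(p',s')$ of points over $K\vee L$, automatically an isomorphism by protomodularity. Through the point--action correspondence, $\psi$ corresponds exactly to a connector for $R_K$ and $R_L$ in the sense of Bourn--Pedicchio, which is the witness that $R_K$ and $R_L$ centralize, whence \SH\ holds.

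The principal obstacle is the careful construction of the two points $(p,s), (p',s')$ over $K\vee L$: one must identify, among all candidate split extensions, those whose comparison captures precisely the Smith centralization of $R_K$ and $R_L$, and then verify the coherence of $\alpha, \gamma$ over the triple pullbacks $K\times_{K\vee L}K$, $K\times_{K\vee L}L$, $L\times_{K\vee L}L$. Fortunately, since $k$ and $l$ are monomorphisms these multiple pullbacks collapse to simpler products (e.g.\ $K\times_{K\vee L}K\cong K$), so the coherence checks should reduce essentially to the defining equations $\phi\iota_K=k$ and $\phi\iota_L=l$ of the cooperator, modulo a routine but technical unwinding of the point--action dictionary.
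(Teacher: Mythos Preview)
Your outline has the right overall shape---apply \UA\ to an extremal epimorphic cospan, extract an isomorphism of points, and read off a connector---but the central step is not a routine verification: it is genuinely missing. You never say what the two points $(p,s)$ and $(p',s')$ over $K\vee L$ actually are. The phrase ``the two possible orderings in which one might compose $R_K$ and $R_L$ as split extensions over $K\vee L$'' does not pick out any specific objects: the relations $R_K$, $R_L$ live over $X$, not over $K\vee L$, and there is no evident way to package their composite as a point over the join of the normal subobjects. Without these points you cannot check the pullback isomorphisms $\alpha,\gamma$, and without those you cannot invoke \UA. Your final claim that the resulting $\psi$ ``corresponds exactly to a connector'' is likewise unsupported: a connector is a map $R_K\times_X R_L\to X$ satisfying unit equations, and nothing in your setup produces a morphism with that domain and codomain.

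The paper's argument is organised quite differently, and the difference is the content of the proof. One works not over $K\vee L$ but over the equivalence relation $S$ itself, using the cospan
\[
\xymatrix{A \ar[r]^-{e_S} & S & Y \ar[l]_-{\langle 0,k\rangle}}
\]
which is extremal epimorphic by protomodularity (it is the section/kernel pair of the split epi $s_1\colon S\to A$). The two points over $S$ are the two pullbacks $P_1=R\times_{(r_2,s_1)}S$ and $P_2=R\times_{(r_2,s_2)}S$, each with its canonical splitting from $S$; the cooperator $\phi$ is used concretely to build the comparison map $u\colon X\times Y\to P_2$ via $\langle\phi,k\pi_2\rangle$. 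One then checks that both $e_S^*$ and $\langle 0,k\rangle^*$ send $(q_2,\bar e)$ and $(q_2',\bar e')$ to isomorphic points, so \UA\ yields $\psi\colon P_1\to P_2$. The connector is then the explicit composite $r_1p_2'\psi\colon R\times_A S\to A$, and the unit equations $r_1p_2'\psi\, e=r_1$ and $r_1p_2'\psi\,\bar e=s_2$ are verified directly. The choice of cospan and of the two points is the heart of the proof; your proposal does not supply an alternative to it.
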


\begin{proof}
%The condition \SH\ holds if any two effective equivalence relations on a same object centralize each other as soon as their corresponding kernel maps cooperate. First observe that by \cite{MFVdL}, to have \SH\ it is sufficient to prove the implication above in the special case of the kernel pairs of domain and codomain projections of an internal reflexive graph.
%
%Let the following be a reflexive graph in \cC:
%\[
%\xymatrix{
%X_1 \ar@<-1ex>[r]_c \ar@<1ex>[r]^d & X_0 \ar[l]|e
%}
%\]
Let $R$ and $S$ be two effective equivalence relations on the same object $A$
\[
\xymatrix{
R \ar@<-1ex>[r]_{r_1} \ar@<1ex>[r]^{r_2} & A \ar[l]|{e_R} \ar[r]|{e_S} & S \ar@<-1ex>[l]_{s_1} \ar@<1ex>[l]^{s_2} 
}
\]
and let $h\colon X \to A$ and $k\colon Y\to A$ (respectively) be the corresponding kernel maps. We want to prove that, under \UA, $R$ and $S$ centralize each other as soon as $h$ and $k$ admit a cooperator $\phi$. Let us consider the following diagram, where the squares $r_2p_1=s_1q_2$ and $r_2p_2'=s_2q_2'$ are pullbacks, $v$ is the unique arrow in $P_1$ induced by $\langle 0,k\rangle\pi_2\colon X \times Y \to S$ and $\langle h,0\rangle\pi_1\colon X \times Y \to R$, and $u$ is the unique arrow in $P_2$ induced by $\langle 0,k\rangle\pi_2\colon X \times Y \to S$ and $\langle \phi,k\pi_2\rangle\colon X \times Y \to R$
\[
\xymatrix@R=2ex@C=7ex{
& P_1 \ar@<-.5ex>[dl]_{p_1} \ar@<.5ex>[ddd]^(.7){q_2} \ar@{-->}[ddr]^{\psi} \\
R \ar@<.5ex>[ddd]^{r_2} \ar@<.5ex>[drr]^(.35){e'} \ar@<-.5ex>[ur]_{e} & & & X\times Y \ar@<.5ex>[ddd]^{\pi_2} \ar[dl]^{u} \ar[ull]_{v} \\
& & P_2 \ar@<.5ex>[ull]^(.65){p'_2} \ar@<.5ex>[ddd]^(.3){q'_2} \\
& S \ar@<-.5ex>[dl]_{s_1} \ar@{=}[ddr] \ar@<.5ex>[uuu]^(.3){\bar{e}} \\
A \ar@<-.5ex>[ur]_{e_S} \ar@<.5ex>[drr]^{e_S} \ar@<.5ex>[uuu]^{e_R} & & & Y \ar[dl]^{\langle 0,k\rangle} \ar[ull]_(.3){\langle 0,k\rangle} \ar@<.5ex>[uuu]^{\langle 0,1\rangle} \\
& & S \ar@<.5ex>[ull]^{s_2} \ar@<.5ex>[uuu]^(.7){\bar{e}'}
}
\]
Since the squares $q_2'e'=e_Sr_2$ and $q_2e=e_Sr_2$ are both pullbacks, the point $(r_2, e_R)$ is isomorphic to $e_S^*(q_2',\bar{e}')$ and $e_S^*(q_2,\bar{e})$ at the same time. On the other hand, the squares $q_2'u=\langle 0,k\rangle\pi_2$ and $q_2v=\langle 0,k\rangle\pi_2$ are also pullbacks, showing that the point $(\pi_2,\langle 0,1\rangle)$ on the right hand side is isomorphic to $\langle 0,k\rangle^*(q_2',\bar{e}')$ and $\langle 0,k\rangle^*(q_2,\bar{e})$ at the same time. By \UA, this implies that there exists an isomorphism $\psi\colon P_1 \to P_2$ representing a morphism of points between $(q_2,\bar{e})$ and $(q_2',\bar{e}')$ and such that $\psi e=e'$ and $\psi v=u$. Finally, putting $p_2=p_2'\psi$, we get a pair of discrete fibrations
\[
\xymatrix@=7ex{
P_1 \ar@<-1ex>[d]_{p_1} \ar@<1ex>[d]^{p_2} \ar@<.5ex>[r]^{q_2} & S \ar@<-1ex>[d]_{s_1} \ar@<1ex>[d]^{s_2} \ar@<.5ex>[l]^{\bar{e}} \\
R \ar@<.5ex>[r]^{r_2} \ar[u]|{e} & A. \ar[u]|{e_S} \ar@<.5ex>[l]^{e_R}
}
\]
%By a dual argument, one can show that a morphism $q_1$ exists making $P_1$ a centralizing double relation for $R$ and $S$:
%\[
%\xymatrix@=7ex{
%P_1 \ar@<-1ex>[d]_{p_1} \ar@<1ex>[d]^{p_2} \ar@<1ex>[r]^{q_2} \ar@<-1ex>[r]_{q_1} & S \ar@<-1ex>[d]_{s_1} \ar@<1ex>[d]^{s_2} \ar[l]|{\bar{e}} \\
%R \ar@<1ex>[r]^{r_2} \ar@<-1ex>[r]_{r_1} \ar[u]|{e} & A \ar[u]|{e_S} \ar[l]|{e_R}
%}
%\]
We will prove that $R$ and $S$ centralize each other by showing that the composite $r_1p_2$ provides a connector. It was proved in \cite{BG02} that, in a Mal'tsev context, the existence of a connector for $R$ and $S$ is equivalent to the fact the they centralize each other.
The following equalities ensure that $r_1p_2$ is indeed a connector for $R$ and $S$ over $A$:
\[
\left\{
\begin{array}{l}
r_1p_2e=r_1p_2'\psi e=r_1p_2'e'=r_1 \\
r_1p_2\bar{e}=r_1p_2'\psi \bar{e}=r_1p_2'\bar{e}'=r_1e_Rs_2=s_2.
\end{array}
\right.
\]
\end{proof}

\end{document}